\theoremstyle{plain}
\newtheorem{Thm}{Theorem}[section]
\newtheorem{Lem}[Thm]{Lemma}
\newtheorem{Prop}[Thm]{Proposition}
\newtheorem{Cor}[Thm]{Corollary}
\theoremstyle{definition}
\newtheorem{Def}[Thm]{Definition}
\numberwithin{equation}{section}
\newcommand{\bnum}{\begin{enumerate}}
\newcommand{\enum}{\end{enumerate}}
\begin{document}
\begin{center}
\textbf{Weakly $I$-clean rings}\\
\end{center}

\begin{center}
 Ajay Sharma and Dhiren Kumar Basnet\\
\small{\it Department of Mathematical Sciences, Tezpur University,
 \\ Napaam, Tezpur-784028, Assam, India.\\
Email: ajay123@tezu.ernet.in \\
 dbasnet@tezu.ernet.in}
\end{center}
\noindent \textit{\small{\textbf{Abstract:}  }} In this article, we introduce the concept of weakly $I$-clean ring, for any ideal $I$ of a ring $R$. We show that, for  an ideal $I$ of a ring $R$, $R$ is uniquely weakly $I$-clean if and only if $R/I$ is semi boolean and idempotents can be lifted uniquely weakly modulo $I$ if and only if for each $a\in R$, there exists a central idempotent $e\in R$ such that either $a-e\in I$ or $a+e\in I$ and $I$ is idempotent free. As a corollary, we characterize weakly $J$-clean ring. Also we study various properties of weakly $I$-clean ring.

\bigskip

\noindent \small{\textbf{\textit{Key words:}} Clean ring, $I$-clean ring, weakly $I$-clean ring, weakly $J$-clean ring.} \\
\smallskip

\noindent \small{\textbf{\textit{$2010$ Mathematics Subject Classification:}}  16N40, 16U99.} \\
\smallskip

\bigskip

\section{INTRODUCTION}
$\mbox{\hspace{.5cm}}$

In this article, all rings are associative ring with unity unless otherwise indicated. Here Jacobson radical of a ring $R$ is denoted by $J(R)$. Set of all units, set of all nilpotent elements and set of all idempotent elements are respectively denoted by $U(R)$, $Nil(R)$ and $Idem(R)$. A ring $R$ is called abelian if every idempotent commutes with every element of $R$. An ideal $I$ of a ring $R$ is said to be idempotent free if $e^2=e\in I$, then $e=0$. For an ideal $I$ of a ring, we say that idempotents can be lifted (respectively, lifted uniquely, lifted centrally) modulo $I$, if for any $x\in R$ with $x-x^2 \in I$, there exists $e\in Idem(R)$ (respectively, unique $e\in Idem(R)$, central $e\in Idem(R)$) such that $x-e\in I$. A ring $R$ is said to be exchange ring if for any $x\in R$ there exists $e\in Idem(R)$ such that $e\in aR$ and $1-e\in (1-a)R$. In $1977$, W. K. Nicholson \cite{nicholson1977lifting} introduced the concept of clean rings as a subclass of exchange rings. He defined a ring $R$ to be clean ring if every element of $R$ can be written as a sum of a unit and an idempotent. Again as a subclass of clean rings A. J. Diesl \cite{disl2013thss} introduced the notion of nil clean ring in the year $2013$. A ring $R$ is said to be a nil clean ring if for any $x\in R$, $x=n+e$, for some $e\in Idem(R)$ and $n\in Nil(R)$. In $2010$, H. Chen \cite{JCHEN} introduced the notion of $J$-clean ring as a ring $R$, where every element of $R$ can be written as a sum of an idempotent and an element from the Jacobson radical. V. A. Hiremath and H. Sharad \cite{hir2013shar} generalised the concepts of nil clean ring and $J$-clean ring to $I$-clean ring in the year $2013$. For an ideal $I$ of a ring $R$, $R$ is said to be $I$-clean ring if for any $x\in R$, there exists $e\in Idem(R)$ such that $x-e\in I$ and if $e$ is unique in the expression, then the ring $R$ is said to be uniquely $I$-clean ring. \par
 Here we introduce the notion of weakly $I$-clean ring, for any ideal $I$ of a ring $R$. For an ideal $I$ of a ring $R$, we say $R$ is weakly $I$-clean ring if for any $x\in R$ there exists $e\in Idem(R)$ such that $x-e\in I$ or $x+e\in I$ and if $e$ is unique, then $R$ is said to be uniquely weakly $I$-clean ring. We study various properties of weakly $I$-clean ring and uniquely weakly $I$-clean ring.

\section{Weakly $I$-clean ring}
\begin{Def}
  For an ideal $I$ of $R$, we say that $R$ is weakly $I$-clean ring, if for each $x\in R$ there exists $e\in Idem(R)$ such that $x-e\in I$ or $x+e\in I$. Also $R$ is said to be uniquely weakly $I$-clean ring if for any $x\in R$, there exists a unique $e\in Idem(R)$ such that $x-e\in I$ or $x+e\in I$.
\end{Def}
It is easily seen that every $I$-clean ring is weakly $I$-clean. The converse is not true because $\mathbb{Z}$ is weakly $3\mathbb{Z}$-clean but not $3\mathbb{Z}$-clean. Note that if $R$ is weakly $I_1$-clean and not weakly $I_2$-clean then $R$ is not necessarily weakly $I_1\cap I_2$-clean. For a commutative ring $R$, $R$ is weakly nil clean ring if $I=Nil(R)$.
\begin{Thm}
  Let $\{R_{\alpha}\}$ be a collection of rings and $I_{\alpha}\leqslant R_{\alpha}$. Then $R=\prod R_{\alpha}$ is weakly $I=\prod I_{\alpha}$-clean \textit{if and only if} each $R_{\alpha}$ is weakly $I_{\alpha}$-clean and at most one $R_{\alpha}$ is not $I_{\alpha}$-clean.
\end{Thm}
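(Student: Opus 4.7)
The plan is to prove the two implications separately. The forward direction splits into an easy projection argument and a less obvious ``at most one'' claim that I will prove by contradiction using a carefully chosen mixed-sign tuple. The backward direction is essentially a coordinate-wise assembly, but requires synchronising a single global sign, which is the main obstacle.

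For $(\Rightarrow)$, suppose $R$ is weakly $I$-clean. Given $x_\alpha \in R_\alpha$, I form the tuple $x \in R$ that is $x_\alpha$ at position $\alpha$ and $0$ elsewhere, apply weak $I$-cleanness, and project to the $\alpha$-th coordinate to conclude that $R_\alpha$ is weakly $I_\alpha$-clean. For the ``at most one'' part I argue by contradiction: suppose distinct indices $\beta,\gamma$ give $R_\beta,R_\gamma$ weakly $I$-clean but not $I$-clean. Then there exist $a\in R_\beta$ and $b\in R_\gamma$ which are not $I$-clean yet are weakly clean, so the only admissible decompositions have the form $a+e_\beta\in I_\beta$ and $b+e_\gamma\in I_\gamma$ for some idempotents $e_\beta,e_\gamma$. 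The tuple $(a,-b,0,0,\ldots)\in R$ cannot admit a ``$-$''-decomposition (which would force $a-e'_\beta\in I_\beta$, impossible) nor a ``$+$''-decomposition (which would force $-b+e'_\gamma\in I_\gamma$, i.e.\ $b-e'_\gamma\in I_\gamma$, also impossible), giving the required contradiction.

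For $(\Leftarrow)$, if every $R_\alpha$ is $I_\alpha$-clean then $R$ is $I$-clean, hence weakly $I$-clean. Otherwise let $\alpha_0$ be the unique exceptional index. Given $x=(x_\alpha)\in R$, I apply weak $I_{\alpha_0}$-cleanness to $x_{\alpha_0}$ and split on the sign. If $x_{\alpha_0}-e_{\alpha_0}\in I_{\alpha_0}$, I use $I_\alpha$-cleanness of each remaining $R_\alpha$ on $x_\alpha$ to obtain idempotents $f_\alpha$ with $x_\alpha-f_\alpha\in I_\alpha$ and assemble $(f_\alpha)$ (with $f_{\alpha_0}=e_{\alpha_0}$) into a global idempotent witnessing $x-e\in I$. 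If instead $x_{\alpha_0}+e_{\alpha_0}\in I_{\alpha_0}$, I apply $I_\alpha$-cleanness of each remaining $R_\alpha$ to $-x_\alpha$, obtaining $g_\alpha$ with $-x_\alpha-g_\alpha\in I_\alpha$, i.e.\ $x_\alpha+g_\alpha\in I_\alpha$, and assemble to get $x+e\in I$.

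The main obstacle is the backward direction's need to pick one global sign. The crucial point is that $I$-cleanness is sign-symmetric: if $R_\alpha$ is $I_\alpha$-clean then both $x_\alpha$ and $-x_\alpha$ admit $I$-clean decompositions, so the non-exceptional coordinates can always be aligned with whichever sign the (possibly) exceptional coordinate $\alpha_0$ dictates. Without the ``at most one'' restriction, two exceptional coordinates could force incompatible signs, which is exactly the contradiction that powers the forward direction.
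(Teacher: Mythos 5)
Your proof is correct and takes essentially the same route as the paper: a projection argument plus a mixed-sign counterexample tuple for the forward direction, and coordinate-wise assembly with one synchronised sign for the converse. Your explicit appeal to the sign-symmetry of $I_\alpha$-cleanness (applying cleanness to $-x_\alpha$ when the exceptional coordinate forces the ``$+e$'' form) makes precise a step the paper glosses over, where its ``$x_\alpha=w_\alpha+e_\alpha$'' in the minus case is evidently a typo for $x_\alpha=w_\alpha-e_\alpha$.
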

\begin{proof}
  If $R$ is weakly $I$ clean, then each $R_{\alpha}$ is weakly $I_{\alpha}$-clean. Suppose that $R_{\alpha_1}$ and $R_{\alpha_2}$ ($\alpha_1\neq \alpha_2$) are not $I_{\alpha_1}$-clean and $I_{\alpha_2}$-clean respectively. There exists $x_{\alpha_1}\in R_{\alpha_1}$ such that $x_{\alpha_1}\neq w_1-e_1$, where $w_1\in I_{\alpha_1}$ and $e_1\in Idem(R_{\alpha_1})$. Similarly there exists $x_{\alpha_2}\in R_{\alpha_2}$ such that $x_{\alpha_2}\neq w_2+e_2$, where $w_2\in I_{\alpha_2}$ and $e_2\in Idem(R_{\alpha_2})$. Define $x=(x_{\alpha})\in R$ by $x_{\alpha}=x_{\alpha}$ for $\alpha \in \{\alpha_1, \alpha_2\}$, otherwise $x_{\alpha}=0$. Then $x\neq w+e$ or $x\neq w-e$, for any $w\in I$ and $e\in Idem(R)$.\\
  Conversely, let each $R_{\alpha}$ be $I_{\alpha}$-clean, then clearly $R$ is $I$-clean. Assume that $R_{\alpha_0}$ is weakly $I_{\alpha_0}$-clean but not $I_{\alpha_0}$-clean and other $R_{\alpha}$'s are $I_{\alpha}$-clean. Let $x=(x_{\alpha})\in R$, so in $R_{\alpha_0}$, $x_{\alpha_0}= w_{\alpha_0}+e_{\alpha_0}$ or $x_{\alpha_0}= w_{\alpha_0}-e_{\alpha_0}$, where $w_{\alpha_0}\in I_{\alpha_0}$ and $e_{\alpha_0}\in Idem(R_{\alpha_0})$. If $x_{\alpha_0}= w_{\alpha_0}+e_{\alpha_0}$, then assume $x_{\alpha}= w_{\alpha}+e_{\alpha}$ $(\alpha \neq \alpha_0)$ and if $x_{\alpha_0}= w_{\alpha_0}-e_{\alpha_0}$, then we assume $x_{\alpha}= w_{\alpha}+e_{\alpha}$ $(\alpha \neq \alpha_0)$, where $w_{\alpha}\in I_{\alpha}$ and $e_{\alpha}\in Idem(R_{\alpha})$. Set $w=(w_{\alpha})$ and $e=(e_{\alpha})$, then either $x=w+e$ or $x=w-e$.
\end{proof}
\begin{Lem}\label{L1}
  If $R$ is weakly $I$-clean ring, then $J(R)\subseteq I$.
\end{Lem}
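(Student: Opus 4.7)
The plan is to fix an arbitrary $j \in J(R)$ and use the weakly $I$-clean hypothesis to produce an idempotent $e$ with $j-e\in I$ or $j+e\in I$; the goal is then to force $e\in I$, which immediately yields $j\in I$.

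To carry this out I would work in the quotient ring $\bar R = R/I$, writing $\bar x$ for the image of $x$. From $j\pm e\in I$ we get $\bar e = \pm\bar j$ in $\bar R$. Now I invoke the standard fact that surjective ring homomorphisms send the Jacobson radical into the Jacobson radical of the image: if $s(1-rj)=1$ in $R$ for each $r$, then $\bar s(1-\bar r\bar j)=1$ in $\bar R$, showing $\bar j \in J(\bar R)$. Consequently $\bar e \in J(\bar R)$. The crux is then the observation that an idempotent in the Jacobson radical is zero: $1-\bar e$ is a unit in $\bar R$ (as $\bar e \in J(\bar R)$), and $\bar e(1-\bar e)=0$, so multiplying by $(1-\bar e)^{-1}$ forces $\bar e=0$, i.e.\ $e\in I$. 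Therefore $j = (j\mp e)\pm e \in I$, completing the argument.

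There is no real obstacle; the only subtlety is recognising that we do not need to apply weak $I$-cleanness directly to an equation involving $j$ beyond the defining one, and that the sign ambiguity in ``$j-e\in I$ or $j+e\in I$'' is harmless because both cases reduce, after passing to $R/I$, to the idempotent $\bar e$ lying in $J(\bar R)$. The proof is thus a two-line computation once one has in hand the containment $\pi(J(R))\subseteq J(R/I)$ and the ``no nonzero idempotent in $J$'' principle.
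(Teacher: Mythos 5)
Your proof is correct. It is, however, organized differently from the paper's: the paper works entirely inside $R$, writing $x-e=w\in I$ with $e$ idempotent, expanding $(x-w)^2=x-w$ to get $x-x^2\in I$, i.e. $x(1-x)\in I$, and then using that $1-x$ is a unit (since $x\in J(R)$) together with the ideal property of $I$ to conclude $x\in I$ (the $x+e\in I$ case being analogous, though the paper only writes out one case). You instead pass to $\overline{R}=R/I$, invoke the standard containment $\pi(J(R))\subseteq J(R/I)$ to place $\overline{e}=\pm\overline{j}$ inside $J(\overline{R})$, and then apply the ``no nonzero idempotent in the Jacobson radical'' principle to get $\overline{e}=0$, hence $j\in I$. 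Both arguments ultimately rest on the same fact, namely the invertibility of $1\mp j$; yours treats the two sign cases uniformly and cites two standard facts as black boxes, while the paper's is a self-contained two-line computation that never needs the radical-of-quotient containment. Indeed, you could trim your own route: since $1\mp j$ are units in $R$, their images $1\mp\overline{j}$ are units in $\overline{R}$ directly, so $\overline{e}(1-\overline{e})=0$ with $\overline{e}=\pm\overline{j}$ already forces $\overline{e}=0$ without appealing to $\pi(J(R))\subseteq J(R/I)$ at all.
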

\begin{proof}
  Let $x\in J(R)$, then there exists $e\in Idem(R)$ such that either $x-e\in I$ or $x+e\in I$. If $x-e\in I$, then $x-e=w$, so $(x-w)^2=x-w$, which implies $x(1-x)\in I$. But $1-x$ is unit, hence $x\in I$.
\end{proof}
The converse of Lemma \ref{L1} is not true as for the ideal $I=5\mathbb{Z}$ in $\mathbb{Z}$, $J(\mathbb{Z})=\{0\}\subseteq 5\mathbb{Z}$, but $\mathbb{Z}$ is not weakly $5\mathbb{Z}$-clean. Note that, if $R$ is weakly $I$-clean, then $R$ is weakly $B$-clean for any ideal $B$ of $R$ with $I\subseteq B$.
\begin{Def}
  Let $I$ be an ideal of $R$. We say that idempotents can be lifted weakly modulo $I$ if for $x^2-x\in I$, there exists $e\in Idem(R)$ such that either $x-e\in I$ or $x+e\in I$. Also we say that idempotents can be lifted uniquely weakly modulo $I$ if for $x^2-x\in I$, there exists a unique $e\in Idem(R)$ such that either $x-e\in I$ or $x+e\in I$.
\end{Def}
\begin{Thm}\label{T2}
  If $I$ is an ideal of $R$ such that $R/I$ is boolean and idempotents lift weakly modulo $I$, then $R$ is weakly $I$-clean.
\end{Thm}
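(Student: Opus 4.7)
The statement looks like essentially a definition-chase, so the plan is to unwind the two hypotheses in the obvious order. I would start by fixing an arbitrary element $x \in R$ and observing that the assumption ``$R/I$ is boolean'' gives exactly the hypothesis needed to invoke weak idempotent lifting. Specifically, in the quotient $\bar{x}^{\,2} = \bar{x}$, which translates back to $R$ as $x^2 - x \in I$.

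With this congruence in hand, I would apply the second hypothesis directly. Since idempotents lift weakly modulo $I$, the relation $x^2 - x \in I$ produces an idempotent $e \in \idem(R)$ for which either $x - e \in I$ or $x + e \in I$. Since $x$ was arbitrary, this is precisely the condition that $R$ be weakly $I$-clean according to the definition at the start of Section~2, completing the proof.

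There is no genuine obstacle here; the only thing to watch is to make sure the conclusion is stated with the ``or'' disjunction exactly as in the definition of weakly $I$-clean and of weak lifting, rather than silently collapsing it to a one-sided statement. The two notions were designed to match, so the implication is formal once $R/I$ boolean is rephrased as the universal hypothesis $x^{2}-x \in I$.
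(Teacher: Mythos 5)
Your proof is correct and is essentially identical to the paper's argument: both use the boolean hypothesis on $R/I$ to get $x^2-x\in I$ for an arbitrary $x$, then invoke weak lifting of idempotents modulo $I$ to obtain $e\in\idem(R)$ with $x-e\in I$ or $x+e\in I$. No differences worth noting.
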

\begin{proof}
  Let $x\in R$. We have $x^2-x\in I$, so there exists $e\in Idem(R)$ such that, either $x-e\in I$ or $x+e\in I$. Hence $R$ is weakly $I$-clean.
\end{proof}
Clearly $\mathbb{Z}$ is weakly $3\mathbb{Z}$-clean but $\mathbb{Z}_3=\mathbb{Z}/3\mathbb{Z}$ is not boolean, so the converse of Theorem \ref{T2} is not true.
\begin{Lem}\label{L2}
  Let $R$ be a ring and $e=f+n$, where $e, f\in Idem(R)$, $n\in Nil(R)$ and $f\in C(R)$, then $n=0$.
\end{Lem}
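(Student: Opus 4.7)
The plan is to exploit the relation $e = f+n$ by squaring and using the centrality of $f$ together with $f^2 = f$, $e^2 = e$, and the nilpotency of $n$. First I would compute $e^2$: since $f$ is central, $ef$ equals $fe$, so
\[
e = e^{2} = (f+n)^{2} = f^{2} + fn + nf + n^{2} = f + 2fn + n^{2}.
\]
Using $e = f+n$ on the left and cancelling $f$ gives the basic identity $n = 2fn + n^{2}$, or equivalently $n^{2} = n(1 - 2f)$.

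Next I would isolate $fn$ by multiplying this identity on the left (or right) by $f$ and using $f^{2} = f$:
\[
fn^{2} = fn(1 - 2f) = fn - 2f^{2}n = fn - 2fn = -fn,
\]
so $fn(1 + n) = 0$. Here comes the one nontrivial observation: because $n$ is nilpotent, $1+n$ is a unit in $R$ (with inverse $\sum_{k \ge 0}(-n)^{k}$, which terminates). Multiplying $fn(1+n)=0$ on the right by $(1+n)^{-1}$ gives $fn = 0$.

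Finally I would substitute $fn = 0$ back into $n^{2} = n - 2fn$ to conclude $n^{2} = n$, i.e.\ $n$ is idempotent. An idempotent that is also nilpotent must be zero, so $n = 0$, as required. I do not anticipate a real obstacle here: the only subtlety is remembering that centrality of $f$ is what lets the cross terms $fn$ and $nf$ collapse into $2fn$, and the nilpotence of $n$ is used twice, once to invert $1+n$ and once at the end to pass from $n = n^{2}$ to $n = 0$.
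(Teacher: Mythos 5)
Your argument is correct: squaring $e=f+n$ and using centrality of $f$ gives $n=2fn+n^{2}$; left-multiplying by $f$ yields $fn(1+n)=0$, and since $1+n$ is a unit ($n$ nilpotent) you get $fn=0$, whence $n=n^{2}$ is both idempotent and nilpotent, so $n=0$. Every step checks out (the line $fn(1-2f)=fn-2f^{2}n$ implicitly commutes $f$ past $n$ once more, which is legitimate because $f$ is central). Note, however, that the paper itself gives no argument here: it simply cites Lemma 2.5 of Hiremath--Sharad, so your proof is genuinely a self-contained replacement rather than a restatement. For comparison, the usual shortcut in the literature is even quicker: since $f$ is central, the binomial expansion gives $n^{3}=(e-f)^{3}=e-3ef+3ef-f=e-f=n$, and a nilpotent element satisfying $n^{3}=n$ must be $0$; this avoids introducing the unit $1+n$ and the intermediate identity $fn=0$. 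Your route costs a couple of extra lines but uses only first-degree manipulations, so either is acceptable.
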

\begin{proof}
  See Lemma 2.5\cite{hir2013shar}.
\end{proof}
For any ring $R$, let $T_n(R)$ be the ring of all upper triangular matrices over $R$ with usual addition and multiplication.
Using Lemma \ref{L2} we can say that if idempotents of a ring $R$ are central, then the idempotents of the ring $S=\{[a_{ij}]\in T_n(R)\,|\, a_{ii}=a_{jj}$ for all $i,j\}$ is the set $Idem(S)=\{eI_n\,|\,e\in Idem(R)\}$.
\begin{Thm}
  If $R$ is weakly $I$-clean ring and idempotents in $R$ are central, then $S=\{[a_{ij}]\in T_n(R)\,|\, a_{ii}=a_{jj}$ for all $i,j\}$ is weakly $I'$-clean, where $I'=\{[a_{ij}]\in S\,|\, a_{ii}\in I \}$.
\end{Thm}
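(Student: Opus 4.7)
The plan is to reduce the weak $I'$-cleanness of $S$ to the weak $I$-cleanness of $R$ applied to the common diagonal entry, using the preceding remark that identifies $\idem(S)$ with $\{eI_n : e \in \idem(R)\}$.

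First I would take an arbitrary $A = [a_{ij}] \in S$ and observe that by the defining condition on $S$ all diagonal entries agree, say $a := a_{11} = a_{22} = \cdots = a_{nn} \in R$. Since $R$ is weakly $I$-clean, I pick $e \in \idem(R)$ with either $a - e \in I$ or $a + e \in I$. Then I set $E := eI_n$; by the remark immediately preceding the theorem (which applies because all idempotents of $R$ are central), $E \in \idem(S)$.

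Next I would verify that $A \pm E \in I'$ in the appropriate case. The matrix $A \pm E$ clearly lies in $S$ (it is upper triangular with all diagonal entries equal to $a \pm e$), so the only thing to check is that its diagonal entries lie in $I$; but each such entry is exactly $a \pm e \in I$ by our choice of $e$. This proves $S$ is weakly $I'$-clean.

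The only point that requires a small extra check is that $I'$ really is a (two-sided) ideal of $S$, so that ``weakly $I'$-clean'' makes sense; this follows from the upper triangular structure, which forces $(AB)_{ii} = a_{ii} b_{ii}$ for $A, B \in S$, so membership of diagonal entries in $I$ is preserved under left and right multiplication by elements of $S$ (and closure under addition is obvious). The one conceptual step worth naming as the ``hard part'' is really the appeal to the preceding remark: everything rests on knowing that no other idempotents of $S$ are available, which is where the centrality hypothesis on $\idem(R)$ and Lemma \ref{L2} are used. Once this identification of $\idem(S)$ is in hand, the argument above is essentially immediate.
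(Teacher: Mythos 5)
Your proof is correct and follows essentially the same route as the paper: decompose the common diagonal entry $a$ as $w\pm e$ using weak $I$-cleanness of $R$, then write $A=(A\mp eI_n)\pm eI_n$ with $A\mp eI_n\in I'$ and $eI_n\in\idem(S)$, which is exactly the paper's choice of $B$. Your only (shared with the paper) overstatement is that the full identification $\idem(S)=\{eI_n\,:\,e\in\idem(R)\}$ via Lemma \ref{L2} is the crux; for this existence statement one only needs the trivial fact that $eI_n$ is idempotent in $S$, so the centrality hypothesis is not actually used in the argument.
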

\begin{proof}
  Clearly $I'$ is an ideal of $S$. Since idempotents are central, so $Idem(S)=\{eI_n\,|\,e\in Idem(R)\}$. Let $A=[a_{ij}]\in S$ and $a_{ii}=a$. Since $R$ is weakly $I$-clean, so there exist $e\in Idem(R)$ and $w\in I$ such that $a=w+e$ or $a=w-e$. If $a=w+e$, then set $A=B+eI_n$ and $B=(b_{ij})$, where $b_{ij}=w$ if $i=j$, otherwise $b_{ij}=a_{ij}$. Hence $B\in I'$ and $eI_n\in Idem(S)$. Similarly if $a=w-e$, then we can prove that $A=B-eI_n$, where $B\in I'$ and $eI_n\in Idem(S)$.
\end{proof}
An element $x$ of a ring $R$ is said to be quasi-regular if $1-x\in U(R)$.
\begin{Prop}\label{P1}
  If $R$ is a uniquely weakly $I$-clean ring for an ideal $I$ of $R$, then the following hold:
  \begin{enumerate}
    \item $I$ is idempotent free.
    \item $I$ contains all the quasi-regular elements.
    \item $R$ is abelian.
  \end{enumerate}
\end{Prop}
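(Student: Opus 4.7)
To establish (i), apply the uniqueness hypothesis at $x = 0$: the trivial representation $0 - 0 \in I$ exhibits the idempotent $0$, and for any idempotent $e \in I$ we also have $0 - e = -e \in I$, so $e$ is a valid idempotent for $0$ as well. By uniqueness, $e = 0$.

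For (ii), let $q$ be quasi-regular and set $u = 1 - q \in U(R)$. By the weakly $I$-clean property, there is an idempotent $e$ with $q - e \in I$ or $q + e \in I$. If $q - e \in I$, then $q \equiv e \pmod{I}$ and hence $q^2 \equiv e^2 = e \equiv q \pmod{I}$, so $qu = q(1-q) \in I$; multiplying on the right by $u^{-1}$ shows $q \in I$, after which $e = q - (q - e) \in I$ is an idempotent in $I$, so $e = 0$ by (i). The case $q + e \in I$ asks for the parallel computation, which yields $q(1+q) \in I$. The main obstacle is here: $1 + q$ is not automatically a unit, so the easy cancellation fails. I would try to reduce to the first case either by applying the weakly $I$-clean decomposition to the unit $u$ itself and chasing which of its two subcases occurs, or by feeding $-q$ through the definition and observing that its representation $-q - e \in I$ has the form of the first case.

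For (iii), fix an idempotent $e$ and an element $r \in R$, and set $x = er(1-e) = er - ere$. A direct calculation gives $ex = x$, $xe = 0$, and $x^2 = 0$, so $x$ is nilpotent; in particular $1 - x$ is invertible with inverse $1 + x$, so $x$ is quasi-regular and (ii) forces $x \in I$. Moreover $f = e + x$ satisfies $f^2 = e + ex + xe + x^2 = e + x = f$, so $f$ is idempotent. Now apply uniqueness to the element $e$: both $e$ (via $e - e = 0 \in I$) and $f$ (via $e - f = -x \in I$) satisfy the weakly $I$-clean condition for $e$, so $f = e$; that is, $er(1-e) = 0$ and $er = ere$. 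The symmetric argument with $y = (1-e)re$ yields $re = ere$, hence $er = re$ and $e$ is central.
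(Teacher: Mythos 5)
Your parts (i) and (iii) are essentially the paper's own arguments: for (i) the paper tests uniqueness at $x=1$ (comparing the idempotents $1$ and $1-e$) where you test it at $x=0$, and for (iii) the paper compares the two decompositions $x=x+0$ and $x=e+(er-ere)$ of the idempotent $x=e+er-ere$, which is exactly your comparison of $e$ and $f=e+x$. The one loose end in your (iii) is that you import $x=er-ere\in I$ from (ii). You should instead get it directly: since $x^2=0$, the weak decomposition of $x$ gives an idempotent $g$ with $x\mp g\in I$, so $\overline{g}=\overline{x}^2=\overline{0}$ in $R/I$, hence $g\in I$, $g=0$ by (i), and $x\in I$ (equivalently, $x(1\mp x)\in I$ with $1\mp x$ a unit because $x^2=0$). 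With that patch (iii) is independent of (ii) and correct.

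The gap you flagged in (ii) is genuine, and in fact it cannot be closed, because statement (ii) is false as written. Take $R=\mathbb{Z}_3$ and $I=\{0\}$: every element is $0$, $1$ or $-1$, so $R$ is uniquely weakly $I$-clean (consistent with the later proposition of the paper asserting $R/I\cong\mathbb{Z}_2$ or $\mathbb{Z}_3$ when $0,1$ are the only idempotents), yet $q=-1$ is quasi-regular, since $1-q=2\in U(\mathbb{Z}_3)$, and $q\notin I$. Your two proposed repairs fail on exactly this element: feeding $-q$ through the definition returns you to the case that needs $1+q$ invertible, and decomposing the unit $u=1-q$ gives no leverage. The paper's own proof makes precisely the slip you declined to make: after writing $1-a=(1-e)+y$ or $1-a=-(1-e)+y$ and deducing $e=0$, it asserts $a\in I$ in both branches, whereas the second branch only yields $a\equiv 2\pmod{I}$. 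What is actually true, and is all that the rest of the paper needs, is the restricted version your computation proves: if the branch that occurs has $1\mp q$ invertible then $q\in I$; in particular $J(R)\subseteq I$ (Lemma~\ref{L1}) and $Nil(R)\subseteq I$, but not every quasi-regular element of $R$ lies in $I$.
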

\begin{proof}
  \begin{enumerate}
    \item Let $e^2=e\in I$. Since $1=1+0=(1-e)+e$, hence $e=0$.
    \item Let $a\in R$ be quasi-regular. Since $R$ is uniquely weakly $I$-clean ring, so $1-a=(1-e)+y$ or $1-a=-(1-e)+y$, for some $e\in Idem(R)$ and $y\in I$. Now $(1-a)e=ye\in I$, but $1-a\in U(R)$ which implies $e\in I$. So by $(i)$, $e=0$ and hence $a\in I$.
    \item Let $e\in Idem(R)$. By $(ii)$ $Nil(R)\subseteq I$. Let $x=e+er-ere$, clearly $x\in Idem(R)$. Now $x=(e+er-ere)+0=e+(er-ere)$, Hence by uniqueness $er-ere=0$ $i.e.$ $er=ere$. Similarly we can show that $re=ere$.
  \end{enumerate}
\end{proof}
\begin{Lem}\label{L3}
  Let $I$ be an ideal of a ring $R$ with $N(R)\subseteq I$. Then the following are equivalent:
  \begin{enumerate}
    \item Idempotents can be lifted uniquely weakly modulo $I$.
    \item Idempotents can be lifted weakly modulo $I$, $R$ is abelian and $I$ is idempotent free.
    \item Idempotents can be lifted centrally weakly modulo $I$ and $I$ is idempotent free.
  \end{enumerate}
 \end{Lem}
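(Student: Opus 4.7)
The plan is to prove the cycle (i) $\Rightarrow$ (ii) $\Rightarrow$ (iii) $\Rightarrow$ (i). A single mechanism drives all three arrows: whenever two commuting idempotents $e,f$ yield a product $e(1-f)$ lying in $I$, that product is itself idempotent and hence, by idempotent-freeness, is zero, forcing $e=ef$. The standing hypothesis $N(R)\subseteq I$ enters at exactly one place --- recognizing $er-ere=er(1-e)$ as a nilpotent element that must lie in $I$.

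For (i) $\Rightarrow$ (ii), existence of lifts is immediate. To see $I$ is idempotent free, let $e^2=e\in I$; then $e^2-e=0\in I$, and both $0$ and $e$ are idempotent weak lifts of $e$, so uniqueness forces $e=0$. For abelianness I would reuse the trick from Proposition~\ref{P1}: for $e\in\idem(R)$ and $r\in R$, set $x=e+er-ere$, verify $x\in\idem(R)$ directly, and observe that $er-ere$ squares to $0$, hence lies in $N(R)\subseteq I$. Both $e$ and $x$ are then idempotent weak lifts of $x$, so uniqueness yields $er=ere$; a symmetric argument gives $re=ere$, whence $er=re$.

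The implication (ii) $\Rightarrow$ (iii) is essentially a restatement, since abelianness makes every weak lift automatically central, and idempotent-freeness is carried over verbatim.

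For (iii) $\Rightarrow$ (i), I would first upgrade (iii) to abelianness. Given any $e\in\idem(R)$, apply (iii) to $e$ itself (which lifts $e^2-e=0\in I$) to obtain a central idempotent $c$ with $e-c\in I$ or $e+c\in I$. In either sign case, multiplication by $1-c$ and $1-e$ yields the idempotents $e(1-c)$ and $(1-e)c$ inside $I$ (using centrality of $c$), which must vanish, collapsing to $e=ec=ce=c$. With abelianness secured, uniqueness reduces to the same calculation: if two idempotents $e,f$ (now both central) both weakly lift $x$, then a four-case sign analysis produces either $e-f\in I$ (same signs) or $e+f\in I$ (opposite signs), and in each case $e(1-f)$ and $f(1-e)$ are idempotents in $I$, hence zero, giving $e=ef=f$. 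The main obstacle is keeping the bookkeeping clean across the four sign combinations, but centrality collapses each of them to essentially the same one-line manipulation, so no case presents a genuinely new difficulty.
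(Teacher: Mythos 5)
Your proposal is correct and follows essentially the same route as the paper: the same cycle (i)$\Rightarrow$(ii)$\Rightarrow$(iii)$\Rightarrow$(i), the same use of $er-ere=er(1-e)\in N(R)\subseteq I$ together with uniqueness to get abelianness, and the same mechanism of recognizing $e(1-f)$, $f(1-e)$ as idempotents lying in the idempotent-free ideal $I$. The only differences are cosmetic: you write out in full the steps the paper delegates to Lemma 2.8 of \cite{hir2013shar}, and in (iii)$\Rightarrow$(i) you first show every idempotent coincides with its central weak lift (so $R$ is abelian) before running the sign analysis, whereas the paper argues directly with one central and one arbitrary idempotent.
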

 \begin{proof}
   (i) $\Rightarrow$ (ii) proof is same as Lemma 2.8(1) \cite{hir2013shar}.\\
 (ii) $\Rightarrow$ (iii) is trivial.\\
 (iii) $\Rightarrow$ (i)\\
  Let $\overline{x}$ be an idempotent in $R/I$. By assumption, there exists a central idempotent $e\in R$ such that either $x-e\in I$ or $x+e\in I$. Suppose that $f\in Idem(R)$ for which either $x-f\in I$ or $x+f\in I$, then\\
 \textbf{Case I:}\\
  If $x-e\in I$ and $x-f\in I$ or $x+e\in I$ and $x+f\in I$, then $e-f\in I$. Similar to the proof of Lemma $2.8$ \cite{hir2013shar} we can show that $e=f$.\\
 \textbf{Case II:}\\
  If $x-e\in I$ and $x+f\in I$ or $x+e\in I$ and $x-f\in I$, then $e+f\in I$. As $I$ is an ideal so $(e+f)(1-e)\in I$, implies $f(1-e)\in I$. But $e$ is central idempotent, so $f(1-e)\in Idem(R)$, $i.e.$ $f=fe$, as $I$ is idempotent free. Similarly we can show that $e=ef$. Hence $e=f$.\\
 From both the cases we conclude that idempotents can be lifted uniquely weakly modulo $I$.

 \end{proof}
 \begin{Def}
   A ring $R$ is called semi boolean ring if either $x^2=x$ or $x^2=-x$, for all $x\in R$.
 \end{Def}
 Clearly boolean rings are semi boolean rings but converse is not true as $\mathbb{Z}_3$ is semi boolean ring but not boolean.
 \begin{Thm}\label{T4}
   For an ideal $I$ of a ring $R$ the following are equivalent:
   \begin{enumerate}
     \item $R$ is uniquely weakly $I$-clean ring.
     \item $R/I$ is semi boolean and idempotents can be lifted uniquely weakly modulo $I$.
   \end{enumerate}
 \end{Thm}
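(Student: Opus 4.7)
The plan is to prove both implications by unraveling the definitions, with the only subtlety being the sign ambiguity introduced by ``weakly.''

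For (i) $\Rightarrow$ (ii), I would take an arbitrary $x\in R$ and invoke the uniquely weakly $I$-clean hypothesis to obtain the unique $e\in\idem(R)$ with $x-e\in I$ or $x+e\in I$. Passing to $R/I$ forces $\overline{x}=\overline{e}$ or $\overline{x}=-\overline{e}$, and squaring yields $\overline{x}^{\,2}=\overline{x}$ or $\overline{x}^{\,2}=-\overline{x}$, so $R/I$ is semi boolean. The unique weak lifting clause then follows immediately: any $x$ with $x^2-x\in I$ is in particular an element of $R$, and (i) produces the required unique idempotent.

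For (ii) $\Rightarrow$ (i), given $x\in R$ the semi boolean condition on $R/I$ yields $x^2-x\in I$ or $x^2+x\in I$. In the first case I would apply the unique weak lifting hypothesis directly to $x$; in the second case I would apply it to $-x$, since $(-x)^2-(-x)=x^2+x\in I$, and then rewrite the conclusion in terms of $x$. Either way I obtain an idempotent $e$ with $x-e\in I$ or $x+e\in I$. For uniqueness, suppose $e,f\in\idem(R)$ both satisfy the $\pm$ condition for $x$. I would split on whether $x^2-x\in I$ or $x^2+x\in I$ is available: in each case both $e$ and $f$ qualify as weak lifts (in the sense of the lifting hypothesis) of the same element, namely $x$ or $-x$, so uniqueness in (ii) forces $e=f$. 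When both conditions hold simultaneously (which forces $2x\in I$), the two applications of (ii) produce the same idempotent, so no inconsistency arises.

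The main step requiring care is the uniqueness clause in (ii) $\Rightarrow$ (i): the lifting hypothesis in (ii) is phrased only for elements $y$ with $y^2-y\in I$, so whenever only $x^2+x\in I$ is available I must first transfer the argument to $y=-x$ before invoking it. Beyond this bookkeeping of signs, I foresee no genuine obstacle; the theorem is a close parallel of the analogous characterization of uniquely $I$-clean rings in \cite{hir2013shar}, with the weak version handled by a systematic case split on the two semi boolean cases.
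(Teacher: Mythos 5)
Your proposal is correct and follows essentially the same route as the paper: read off semi booleanness by passing $x=i\pm e$ to $R/I$, note that the lifting clause is immediate, and for the converse split on $\overline{x}^{\,2}=\overline{x}$ versus $\overline{x}^{\,2}=-\overline{x}$, applying the lifting hypothesis to $x$ or to $-x$. Your explicit treatment of the uniqueness transfer (and of the overlap case $2x\in I$) is merely a more careful spelling-out of what the paper leaves implicit, since the conditions $x\mp e\in I$ and $-x\pm e\in I$ coincide.
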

 \begin{proof}
   (i)$\Rightarrow$ (ii)\\
    Let $\overline{x}\in R/I$. Then there exists unique $e\in Idem(R)$ such that either $x=i+e$ or $x=i-e$, where $i\in I$. If $x=i+e$, then $\overline{x}^2=\overline{x}$, also if $x=i-e$, then $\overline{x}^2=-\overline{x}$, hence $R/I$ is semi boolean. Next part is obvious.\\
   (ii)$\Rightarrow$ (i)\\
    Let $x\in R$, by $(ii)$ either $\overline{x}^2=\overline{x}$ or $\overline{x}^2=-\overline{x}$. If $\overline{x}^2=\overline{x}$, then by assumption there exists unique $e\in Idem(R)$ such that either $x-e\in I$ or $x+e\in I$. Also if $\overline{x}^2=-\overline{x}$, then $(-\overline{x})^2=-\overline{x}$, again by assumption there exists unique $e\in Idem(R)$ such that either $-x+e\in I$ or $-x-e\in I$.
   \end{proof}
   A ring $R$ is said to be weakly $J$-clean ring if for any $x\in R$, $x=j+e$ or $x=j-e$, where $j\in J(R)$ and $e\in Idem(R)$. Also $R$ is said to be uniquely weakly $J$-clean ring if for any $x\in R$, there exists a unique $e\in Idem(R)$ such that $x-e\in J(R)$ or $x+e\in J(R)$.

   \begin{Cor}
    For a ring $R$, the following are equivalent.
     \begin{enumerate}
       \item $R$ is uniquely weakly $J$-clean.
       \item $R/J(R)$ is semi boolean and idempotents can be lifted uniquely weakly modulo $J(R)$.
     \end{enumerate}
   \end{Cor}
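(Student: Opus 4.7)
The plan is to obtain this corollary as an immediate specialization of Theorem \ref{T4}, taking the ideal $I$ to be the Jacobson radical $J(R)$. Since $J(R)$ is a two-sided ideal of $R$, the hypotheses of Theorem \ref{T4} are automatically satisfied.

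The first step is to verify that the terminology lines up. By inspection, the notion of \emph{uniquely weakly $J$-clean} ring introduced in the paragraph preceding the corollary is, word for word, the definition of \emph{uniquely weakly $I$-clean} ring in the case $I = J(R)$: in both formulations one demands, for every $x \in R$, a unique $e \in \idem(R)$ with $x - e \in J(R)$ or $x + e \in J(R)$. Likewise, \emph{idempotents can be lifted uniquely weakly modulo $J(R)$} is simply the specialization at $I = J(R)$ of the definition of uniquely weakly lifted idempotents given just before Lemma \ref{L3}.

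With these identifications in hand, the equivalence (i) $\Leftrightarrow$ (ii) of Theorem \ref{T4} applied with $I = J(R)$ immediately yields the stated equivalence of the corollary. There is no real obstacle: the work is entirely done by Theorem \ref{T4}, and the only content of the corollary is the observation that $J(R)$ qualifies as a valid choice of ideal in that theorem. If anything, one might add a single line remarking that $R/J(R)$ being semi boolean is meaningful (it constrains $R/J(R)$ to be a subdirect product of copies of $\mathbb{Z}_2$ and $\mathbb{Z}_3$), but this observation is not needed for the proof itself.
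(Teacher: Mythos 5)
Your proposal is correct and matches the paper's intent exactly: the paper states this corollary immediately after Theorem \ref{T4} with no separate proof, precisely because it is the specialization $I = J(R)$, which is a two-sided ideal, and the ad hoc definitions of (uniquely) weakly $J$-clean and of unique weak lifting modulo $J(R)$ coincide with the general ones at $I = J(R)$. Your extra remark about semi boolean quotients is harmless and, as you note, not needed.
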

   A ring $R$ is said to be weakly nil clean ring if for any $x\in R$, $x=e+n$ or $x=-e+n$, where $n\in Nil(R)$ and $e\in Idem(R)$. Also $R$ is said to be uniquely weakly nil clean ring if for any $x\in R$, there exists a unique $e\in Idem(R)$ such that $x-e\in Nil(R)$ or $x+e\in Nil(R)$.

   \begin{Cor}
    For a commutative ring $R$, the following are equivalent.
     \begin{enumerate}
       \item $R$ is uniquely weakly nil clean.
       \item $R/Nil(R)$ is semi boolean and idempotents can be lifted uniquely weakly modulo $Nil(R)$.
     \end{enumerate}
   \end{Cor}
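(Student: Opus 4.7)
The plan is to obtain this corollary as an immediate specialization of Theorem \ref{T4}. The only thing one needs to verify before invoking that theorem is that $\nil(R)$ actually qualifies as an ideal of $R$, since Theorem \ref{T4} is stated for an arbitrary ideal $I$ of $R$. This is exactly the role played by the commutativity hypothesis: in a commutative ring, the set of nilpotent elements is closed under addition (by the binomial theorem, since if $a^m=0$ and $b^n=0$ then $(a+b)^{m+n-1}=0$) and absorbs multiplication by any element, so $\nil(R)$ is a genuine (two-sided) ideal, namely the nilradical.

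Once this is in place, the proof is a one-line substitution: set $I=\nil(R)$ in Theorem \ref{T4}. By definition, $R$ being uniquely weakly $\nil(R)$-clean is the same as $R$ being uniquely weakly nil clean (the $w\in I$ in the weak $I$-clean expression becomes a nilpotent element), so condition (i) of Theorem \ref{T4} translates directly to condition (i) of the corollary. Condition (ii) of Theorem \ref{T4} becomes, verbatim, condition (ii) of the corollary, since the quotient $R/\nil(R)$ and the weak lifting of idempotents modulo $\nil(R)$ are exactly the objects appearing in the statement.

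There is essentially no main obstacle beyond recording the ideal property of $\nil(R)$; all the substantive work has already been done in the proof of Theorem \ref{T4}. I would therefore write the proof as a short remark noting that $\nil(R)$ is an ideal because $R$ is commutative, followed by a direct appeal to Theorem \ref{T4} with $I=\nil(R)$.
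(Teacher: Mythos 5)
Your proposal is correct and follows exactly the route the paper intends: the corollary is stated without a separate proof precisely because it is the specialization of Theorem \ref{T4} to $I=\nil(R)$, with commutativity invoked only to guarantee that $\nil(R)$ is an ideal. Your explicit remark on this point is the only content needed, so the proposal matches the paper's argument.
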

   \begin{Cor}\label{Cor1}
     Let $I$ be an ideal of a ring $R$. Then the following are equivalent.
     \begin{enumerate}
       \item $R$ is uniquely weakly $I$-clean.
       \item $R/I$ is semi boolean and idempotents can be lifted uniquely weakly modulo $I$.
       \item $R/I$ is semi boolean and idempotents can be lifted weakly modulo $I$, $R$ is abelian and $I$ is idempotent free.
       \item For each $a\in R$ there exists a central idempotent $e\in R$ such that either $a-e\in I$ or $a+e\in I$ and $I$ is idempotent free.
     \end{enumerate}
   \end{Cor}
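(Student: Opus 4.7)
The plan is to close the equivalence cycle (i) $\Leftrightarrow$ (ii) $\Leftrightarrow$ (iii) $\Leftrightarrow$ (iv) using the two results already in the paper together with one direct calculation. The equivalence (i) $\Leftrightarrow$ (ii) is exactly Theorem \ref{T4}. For (ii) $\Leftrightarrow$ (iii), I would invoke Lemma \ref{L3}, but first need to verify its standing hypothesis $N(R) \subseteq I$: if $R/I$ is semi boolean and $\overline{x}^{\,n}=0$, the relation $\overline{x}^{\,2}=\pm\overline{x}$ implies every power of $\overline{x}$ equals $\pm\overline{x}$, whence $\overline{x}=0$. With that observation in hand, Lemma \ref{L3} yields (ii) $\Leftrightarrow$ (iii) verbatim.

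For (iii) $\Rightarrow$ (iv), take $a\in R$. Semi-booleanness of $R/I$ makes either $\overline{a}$ or $-\overline{a}$ idempotent, so weak lifting modulo $I$ produces $e\in Idem(R)$ with $a-e\in I$ or $a+e\in I$, and $e$ is automatically central because $R$ is abelian. Idempotent-freeness of $I$ is part of (iii).

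The main work lies in (iv) $\Rightarrow$ (iii), which closes the cycle. Semi-booleanness of $R/I$ and weak liftability of idempotents drop out of (iv) immediately (take $a$ arbitrary, respectively $a$ with $a^{2}-a\in I$). The crux is showing $R$ is abelian: given an idempotent $f\in R$, (iv) supplies a central idempotent $e$ with $f-e\in I$ or $f+e\in I$, and the aim is to prove $f=e$. I plan to multiply $f\pm e$ by $1-e$ on the left and by $1-f$ on the right. Centrality of $e$ turns $(1-e)f$ and $e(1-f)$ into idempotents of $R$, and the identities
\[
(1-e)(f\pm e)=(1-e)f,\qquad (f\pm e)(1-f)=\pm\, e(1-f)
\]
show that both these idempotents lie in $I$ and therefore vanish by idempotent-freeness. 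The first cancellation forces $f=ef$, the second forces $e=ef$, and together $f=e$, so $f$ is central.

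The main obstacle I anticipate is spotting that one must right-multiply by $1-f$ in addition to the more natural left multiplication by $1-e$. The left-sided computation alone yields only $f=ef$ and leaves the asymmetry between $e$ and $f$ intact; the two-sided cancellation is what makes the argument go through uniformly in the sign of $f\pm e$ and delivers abelianness without any extra hypothesis.
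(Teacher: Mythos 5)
Your proposal is correct and follows essentially the same route as the paper: both hinge on observing that semi-booleanness of $R/I$ forces $N(R)\subseteq I$ so that Lemma \ref{L3} applies, with (i)$\Leftrightarrow$(ii) supplied by Theorem \ref{T4}. Your only deviation is that you prove abelianness in (iv)$\Rightarrow$(iii) by a direct two-sided cancellation ($f=ef$ and $e=ef$ via the idempotents $(1-e)f$ and $e(1-f)$ lying in $I$), which merely reproduces, in a self-contained way, the Case II computation already inside the proof of Lemma \ref{L3}.
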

   \begin{proof}
     If $R/I$ is semi boolean, then for $n\in N(R)$, $n^k=0$, for some $k\in \mathbb{N}$. Either $\overline{n}^k=\overline{n}$ or $\overline{n}^k=-\overline{n}$ but $\overline{n}^k=I$, implies $n\in I$. Hence by Lemma \ref{L3} the result follows.
   \end{proof}
   \begin{Prop}
     For an ideal $I$ of a ring $R$ the following are equivalent.
     \begin{enumerate}
       \item $I$ is prime and $R$ is uniquely weakly $I$-clean.
       \item $R$ is uniquely weakly $I$-clean and $0,1$ are the only idempotents in $R$.
       \item $R/I\cong \mathbb{Z}_2$ or $\mathbb{Z}_3$ and $I$ is idempotent free.
       \item $I$ is maximal and $R$ is uniquely weakly $I$-clean.
     \end{enumerate}
   \end{Prop}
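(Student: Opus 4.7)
My plan is to close the cycle $(i) \Rightarrow (ii) \Rightarrow (iii) \Rightarrow (iv) \Rightarrow (i)$, leaning on Proposition~\ref{P1} (any uniquely weakly $I$-clean ring has $I$ idempotent free and is abelian), on Theorem~\ref{T4} and Corollary~\ref{Cor1} (the characterization of uniquely weakly $I$-clean rings as those with $R/I$ semi boolean plus idempotents lifting uniquely weakly modulo $I$), and on the standard fact that an ideal is prime iff the quotient is a domain.

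For $(i) \Rightarrow (ii)$ I would take any idempotent $e \in R$; from $e(1-e)=0 \in I$ and primality of $I$, either $e \in I$ or $1-e \in I$, and since $I$ is idempotent free by Proposition~\ref{P1}(i), whichever of these two idempotents lies in $I$ must vanish, giving $e \in \{0,1\}$. For $(ii) \Rightarrow (iii)$, Theorem~\ref{T4} tells me $R/I$ is semi boolean and idempotents lift uniquely weakly modulo $I$; combined with the hypothesis that the only idempotents of $R$ are $0$ and $1$, the idempotents of $R/I$ can only be $\bar 0$ and $\bar 1$. The semi boolean property then forces every $\bar a \in R/I$ to satisfy $\bar a \in \{\bar 0,\bar 1\}$ or $-\bar a \in \{\bar 0,\bar 1\}$, so $R/I \subseteq \{\bar 0,\bar 1,-\bar 1\}$. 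If $\bar 1 = -\bar 1$ this set has two elements and $R/I \cong \mathbb{Z}_2$; otherwise the three classes are distinct and $R/I$ is the unique unital ring of order three, namely $\mathbb{Z}_3$. Idempotent-freeness of $I$ is Proposition~\ref{P1}(i) once more.

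For $(iii) \Rightarrow (iv)$, both $\mathbb{Z}_2$ and $\mathbb{Z}_3$ are fields so $I$ is maximal. To recover uniquely weakly $I$-clean I would invoke Corollary~\ref{Cor1}: $R/I$ is manifestly semi boolean, its only idempotents $\bar 0,\bar 1$ lift to $0,1 \in R$, and uniqueness of the weak lift follows because any further idempotent $f \in R$ with $f \in I$ or $1-f \in I$ is forced by idempotent-freeness of $I$ to be $0$ or $1$ respectively. Finally $(iv) \Rightarrow (i)$ is immediate since every maximal ideal is prime. The main obstacle is $(ii) \Rightarrow (iii)$: three hypotheses (semi-booleanness of the quotient, unique weak lifting, and the only-trivial-idempotents assumption) must be orchestrated to cage $R/I$ inside a three-element set, after which recognising a three-element unital ring as $\mathbb{Z}_3$ is a small extra observation. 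The remaining implications are essentially bookkeeping once the tools of the preceding section are in hand.
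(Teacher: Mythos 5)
Your cycle (i)$\Rightarrow$(ii)$\Rightarrow$(iii)$\Rightarrow$(iv)$\Rightarrow$(i) is exactly the paper's, built on the same tools (Proposition \ref{P1}, Theorem \ref{T4}, Corollary \ref{Cor1}), and the implications (ii)$\Rightarrow$(iii), (iii)$\Rightarrow$(iv), (iv)$\Rightarrow$(i) go through as you describe; indeed at (iii)$\Rightarrow$(iv) you spell out the verification of unique weak lifting that the paper leaves implicit, which is a genuine improvement in completeness.

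The one step that needs repair is (i)$\Rightarrow$(ii). The rings here are not assumed commutative, so from $e(1-e)=0\in I$ alone primality of $I$ gives you nothing: for a prime ideal of a noncommutative ring the correct criterion is $aRb\subseteq I\Rightarrow a\in I$ or $b\in I$, not $ab\in I\Rightarrow a\in I$ or $b\in I$ (take $R=M_2(k)$, $I=0$, $e=E_{11}$: $e(1-e)=0$ but neither factor is in $I$); likewise ``$I$ prime iff $R/I$ is a domain'' is the commutative (completely prime) statement. The fix is the one the paper makes and which you already have in your toolkit: by Proposition \ref{P1}(iii) the ring $R$ is abelian, so $e$ is central and $eR(1-e)=0\subseteq I$, whence primality gives $e\in I$ or $1-e\in I$, and idempotent-freeness of $I$ (Proposition \ref{P1}(i)) forces $e=0$ or $e=1$. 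With that single adjustment your argument coincides with the paper's proof.
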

 \begin{proof}
   (i)$\Rightarrow$ (ii)\\
    Let $e\in Idem(R)$. By Proposition \ref{P1}, $R$ is abelian, so $eR(1-e)=0\subseteq I$. Therefore $e\in I$ or $1-e\in I$, but again by Proposition \ref{P1}, $I$ is idempotent free and hence $e=0$ or $e=1$.\\
   (ii)$\Rightarrow$ (iii) \\
   From Theorem \ref{T4}, $R/I$ is semi boolean. Let $\overline{x}\in R/I$, then either $\overline{x}$ is boolean or $-\overline{x}$ is boolean. If $\overline{x}$ is boolean, then $x\in I$ or $x-1\in I$ and if $-\overline{x}$ is boolean, then $-x\in I$ or $-x-1\in I$. From both the cases, either $\overline{x}=\overline{0}$ or $\overline{x}=\overline{1}$ or $\overline{x}=\overline{-1}$. Hence $R/I\cong \mathbb{Z}_2$ or $\mathbb{Z}_3$.\\
   (iii)$\Rightarrow$ (iv) \\
   Assume $R/I\cong \mathbb{Z}_2$ or $\mathbb{Z}_3$, then clearly $I$ is a maximal ideal and $R/I$ is semi boolean.\\
   (iv)$\Rightarrow$ (i) It is obvious.
 \end{proof}
\begin{Cor}
  The following are equivalent for a non trivial ring $R$.
  \begin{enumerate}
    \item $R$ is local and uniquely weakly $J$-clean.
    \item $R$ is uniquely weakly $J$-clean and $0,1$ are the only idempotents in $R$.
    \item $R/J(R)\cong \mathbb{Z}_2$ or $\mathbb{Z}_3$
  \end{enumerate}
\end{Cor}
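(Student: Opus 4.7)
The plan is to obtain this Corollary as a direct specialisation of the preceding Proposition to the ideal $I=J(R)$. I will match the three listed conditions with three of the four equivalent conditions of that Proposition, and deal with the one cosmetic discrepancy (the absence of an explicit ``idempotent free'' hypothesis in condition~(3) here).

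First I would observe that $R$ is local if and only if $J(R)$ is a maximal ideal. Thus condition~(1) is literally condition~(iv) of the Proposition read with $I=J(R)$. Condition~(2) here is verbatim condition~(ii) of the Proposition with $I=J(R)$. So by the Proposition, (1) $\Leftrightarrow$ (2), and both are equivalent to ``$R/J(R) \cong \mathbb{Z}_2$ or $\mathbb{Z}_3$, and $J(R)$ is idempotent free''. It remains only to check that the ``idempotent free'' clause can be dropped, i.e.\ that it is automatic for the Jacobson radical.

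For this I would use the standard fact: if $e^2 = e \in J(R)$, then $1 - e \in 1 + J(R) \subseteq U(R)$, and the identity $e(1-e) = 0$ combined with $1-e$ being a unit forces $e = 0$. Hence $J(R)$ is always idempotent free in every ring, so the Proposition's condition~(iii) with $I = J(R)$ is equivalent to condition~(3) of the present Corollary.

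Putting these observations together, the chain (1) $\Leftrightarrow$ (iv)$_{I=J(R)}$ $\Leftrightarrow$ (ii)$_{I=J(R)}$ $\Leftrightarrow$ (iii)$_{I=J(R)}$ $\Leftrightarrow$ (3), together with (2) $=$ (ii)$_{I=J(R)}$, closes the equivalence. There is no serious obstacle: the only content beyond quoting the Proposition is the one-line verification that $J(R)$ contains no nonzero idempotents, which is well known and independent of the weakly $I$-clean theory. The nontriviality hypothesis on $R$ is only needed to ensure $\mathbb{Z}_2$ and $\mathbb{Z}_3$ are genuine possibilities (so that $0 \neq 1$ in $R/J(R)$).
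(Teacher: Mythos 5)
Your proposal is in substance the same argument the paper intends: the Corollary is stated without proof precisely because it is the specialisation $I=J(R)$ of the preceding Proposition, and your only added content --- that $J(R)$ is automatically idempotent free because $e^2=e\in J(R)$ gives $e(1-e)=0$ with $1-e$ a unit --- is exactly the observation needed to drop the ``idempotent free'' clause from condition (iii). One step, however, is misstated: for a general (noncommutative) ring it is \emph{not} true that $R$ is local if and only if $J(R)$ is a maximal ideal; maximality of $J(R)$ as a two-sided ideal only says $R/J(R)$ is a simple ring (e.g.\ a matrix ring over a field has $J=0$ maximal but is not local), whereas locality means $R/J(R)$ is a division ring. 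Only the direction ``local $\Rightarrow J(R)$ maximal'' is valid, so your identification of (1) with (iv) is literal in one direction only, and the chain as written does not by itself justify (iv)$\Rightarrow$(1). The patch is immediate and keeps your structure: from (iv) (or (2)) the Proposition gives (3), i.e.\ $R/J(R)\cong\mathbb{Z}_2$ or $\mathbb{Z}_3$, which is a division ring, and $R/J(R)$ being a division ring is equivalent to $R$ being local; together with the uniquely weakly $J$-clean condition supplied by (iv) this yields (1). With that rerouting, and your correct remark that the nontriviality of $R$ only guarantees $0\neq 1$ in $R/J(R)$, the proof is complete and coincides with the paper's intended derivation.
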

For a ring $R$, let $V$ be an $R-R$ bimodule, which is a ring not necessarily with $1$. Let $I(R;V)$, the ideal extension of $R$ by $V$, is defined to be the additive abelian group $I(R;V)=R\oplus V$, where the multiplication is defined by $(r,v)(s,w)=(rs, rw+vs+vw)$, for all $v,w\in V$ and $r,s\in R$.
\begin{Lem}\label{L4}
  Let $R$ be a ring, $V$ be an $R$-$R$-bimodule which is also a ring (not necessarily with unity) and let $S=I(R;V)$ be the ideal extension. If $V$ is idempotent free and for each $e\in Idem(R)$, $ev=ve$ for all $v\in V$, then $Idem(S)=\{(e,0)\,\,|\,\,e\in Idem(R)\}$.
\end{Lem}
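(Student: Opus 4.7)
The containment $\{(e,0)\mid e\in Idem(R)\}\subseteq Idem(S)$ is automatic from the ideal-extension product formula $(e,0)(e,0)=(e^2,0)$, so the entire content of the lemma lies in the reverse inclusion. The plan is to take an arbitrary idempotent $(e,v)\in S$, expand $(e,v)^2=(e,v)$, and use the idempotent-free hypothesis on $V$ to force $v=0$. Expanding gives $e^2=e$ together with $v^2+ev+ve=v$, and since $e\in Idem(R)$ commutes with every element of $V$ by hypothesis, this collapses to
\[
v^2 \;=\; v-2ev \;=\; v(1-2e).
\]

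The key idea I would exploit is the ``sign element'' $1-2e\in R$, which satisfies $(1-2e)^2=1-4e+4e^2=1$ and commutes with $v$ through the bimodule structure. Using this I would compute
\[
v^3 \;=\; v\cdot v^2 \;=\; v\cdot v(1-2e) \;=\; v^2(1-2e) \;=\; v(1-2e)^2 \;=\; v,
\]
so $v$ satisfies $v^3=v$. This is what unlocks the problem: it immediately yields $(v^2)^2=v^4=v\cdot v^3=v^2$, exhibiting $v^2$ as an idempotent of the (non-unital) ring $V$. The hypothesis that $V$ is idempotent free then forces $v^2=0$.

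Substituting $v^2=0$ back into $v=v^2+2ev$ gives $v=2ev$; multiplying on the left by $e$ and using $e^2=e$ yields $ev=2ev$, whence $ev=0$, and therefore $v=2ev=0$, as required. I expect the only delicate point to be bookkeeping: one has to make sure that the hypothesis $ev=ve$ for a fixed idempotent $e$, combined with the $R$-$R$-bimodule structure, genuinely justifies both the rewriting $v^2=v(1-2e)$ and the commutation of $v$ with $1-2e$ used in the $v^3=v$ computation. Once that is pinned down, the chain $v^3=v\Rightarrow v^2\in Idem(V)\Rightarrow v^2=0\Rightarrow v=0$ runs mechanically and completes the characterisation of $Idem(S)$.
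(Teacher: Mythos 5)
Your argument is correct. Note, though, that the paper itself gives no proof of this lemma: it simply cites Lemma 2.22 of Hiremath--Sharad \cite{hir2013shar}, so what you have produced is a self-contained verification rather than a variant of an in-paper argument. Your chain of deductions checks out: from $(e,v)^2=(e,v)$ one gets $e^2=e$ and $v^2+ev+ve=v$, the hypothesis $ev=ve$ gives $v^2=v-2ev$, then $v^3=v$, hence $v^2\in\idem(V)$, so $v^2=0$, and finally $v=2ev$ forces $ev=0$ and $v=0$. The two ``delicate points'' you flagged are indeed the only implicit inputs: the rewriting $v-2ev=v(1-2e)$ uses $v\cdot 1=v$, i.e.\ that $V$ is a unital bimodule (which is implicit in $(1,0)$ being the identity of $S$), and the computation of $v^3$ uses the mixed associativity laws $v(wr)=(vw)r$ and $(vr)s=v(rs)$, which are exactly the compatibility conditions needed for $S=\I(R;V)$ to be an associative ring and so hold under the paper's standing conventions. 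If you want to avoid the unital-bimodule point entirely, you can skip the element $1-2e$ and instead square the identity $v^2=v-2ev$ directly: $(v^2)^2=(v-2ev)(v-2ev)=v^2-4ev^2+4e^2v^2=v^2$, which again exhibits $v^2$ as an idempotent of $V$ and lets the rest of your argument run unchanged.
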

\begin{proof}
  See Lemma $2.22$ \cite{hir2013shar}.
\end{proof}
\begin{Lem}
  Let $R$ be a ring, $V$ be an $R$-$R$-bimodule which is also a ring (not necessarily with $1$) and let $S=I(R;V)$ be the ideal extension. Then the following are equivalent.
  \begin{enumerate}
    \item For every $v\in V$, there exists $w\in V$ such that $v+w+vw=0$.
    \item $(1,v)\in U(S)$ for all $v\in V$.
    \item $U(S)=\{(u,v)\in S\,\,|\,\, u\in U(R)\}$.
    \item $J(S)=J(R)\times V$.
  \end{enumerate}
  Further if any of four equivalent conditions holds, then $J(R)=\{r\in R\,\,|\,\, (r,0)\in J(S)\}$.
\end{Lem}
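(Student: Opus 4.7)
The plan is to prove the cycle $(1)\Leftrightarrow(2)\Rightarrow(3)\Rightarrow(4)\Rightarrow(2)$ and then read off the concluding remark about $J(R)$ from the established description of $J(S)$.

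I would first settle $(1)\Leftrightarrow(2)$ via the adjoint circle operation $v\circ w:=v+w+vw$ on $V$. A short direct check shows that $(V,\circ,0)$ is a monoid (associativity is a one-line expansion, and $0$ is a two-sided identity). Condition (1) says every element has a right $\circ$-inverse, and in any monoid this forces right inverses to be two-sided: pick $y$ with $x\circ y=0$ and $z$ with $y\circ z=0$; then associativity gives $x=x\circ(y\circ z)=(x\circ y)\circ z=z$, whence $y\circ x=y\circ z=0$. Thus (1) supplies $w\in V$ with $v+w+vw=0=w+v+wv$, and the explicit multiplication yields $(1,v)(1,w)=(1,v+w+vw)=(1,0)=(1,w+v+wv)=(1,w)(1,v)$, proving (2). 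Conversely, if $(1,v)$ has inverse $(s,w)$, expanding $(1,v)(s,w)=(s,w+vs+vw)=(1,0)$ forces $s=1$ and $v+w+vw=0$, giving (1).

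For $(2)\Rightarrow(3)$: one inclusion is immediate since the first-coordinate projection $S\to R$ is a unital ring map; the other comes from the factorization $(u,v)=(u,0)(1,u^{-1}v)$, where $(u,0)$ has inverse $(u^{-1},0)$ and $(1,u^{-1}v)\in U(S)$ by (2). For $(3)\Rightarrow(4)$: a direct expansion $(1,0)-(s,w)(r,v)=(1-sr,\,-(sv+wr+wv))$ combined with (3) shows this is a unit for every $(s,w)\in S$ iff $1-sr\in U(R)$ for every $s\in R$, i.e.\ iff $r\in J(R)$; hence $J(S)=J(R)\times V$. For $(4)\Rightarrow(2)$: since $(0,v)\in J(R)\times V=J(S)$ and elements of the Jacobson radical are quasi-regular, $(1,v)=(1,0)+(0,v)\in U(S)$. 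Finally, the last clause $J(R)=\{r\in R\mid (r,0)\in J(S)\}$ is then just the first-coordinate slice of $J(S)=J(R)\times V$.

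The only mildly subtle step is the two-sidedness argument in $(1)\Rightarrow(2)$; without it one only obtains a right inverse for $(1,v)$, not a genuine unit in $S$. The remaining implications are routine manipulation with the ideal-extension multiplication formula and the standard one-sided characterization of the Jacobson radical, so I would keep those parts brief and concentrate the exposition on the circle-operation monoid argument.
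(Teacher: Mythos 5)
Your proof is correct. For this lemma the paper itself gives no argument at all --- it simply cites Lemma 2.23 of Hiremath and Sharad --- so there is no in-paper proof to match; what you have written is a complete, self-contained replacement. Each step checks out against the multiplication rule $(r,v)(s,w)=(rs,\,rw+vs+vw)$: the circle operation $v\circ w=v+w+vw$ on $V$ is indeed an associative monoid with identity $0$, and your observation that condition (i) only hands you right $\circ$-inverses, so that the monoid argument ($x=x\circ(y\circ z)=(x\circ y)\circ z=z$) is needed to upgrade them to two-sided inverses, is exactly the point one must not skip in a noncommutative setting; the factorization $(u,v)=(u,0)(1,u^{-1}v)$ for (ii)$\Rightarrow$(iii), the expansion $(1,0)-(s,w)(r,v)=(1-sr,\,-(sv+wr+wv))$ together with the characterization $r\in J(R)\Leftrightarrow 1-sr\in U(R)$ for all $s$ for (iii)$\Rightarrow$(iv), and the quasi-regularity of radical elements for (iv)$\Rightarrow$(ii) are all standard and correctly applied, and the final clause does follow immediately from $J(S)=J(R)\times V$. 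This is presumably the same style of argument as in the cited source, so no substantive divergence --- the only difference is that you supply the details the paper outsources.
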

\begin{proof}
  See Lemma $2.23$ \cite{hir2013shar}.
\end{proof}
\begin{Prop}
  Let $R$ be a ring and let $V$ be an $R$-$R$-bimodule which is also an idempotent-free ring not necessarily with $1$. Let $S=I(R;V)$ be the ideal extension of $R$ by $V$. Then the following are equivalent.
  \begin{enumerate}
    \item $S=I(R;V)$ is uniquely weakly $I'$-clean for some ideal $I'$ of $S$.
    \item \begin{enumerate}
            \item $R$ is uniquely weakly $I$-clean for some ideal $I$ of $R$.
            \item If $e\in Idem(R)$, then $ev=ve$ for all $v\in V$.
          \end{enumerate}
  \end{enumerate}
\end{Prop}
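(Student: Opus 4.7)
The plan is to prove both implications by exploiting Lemma \ref{L4} (which pins down $\idem(S)$ once $V$ is idempotent free and idempotents of $R$ commute with $V$), together with Proposition \ref{P1}. The bridge in both directions is the observation that in $S=R\oplus V$, a decomposition $(r,v)=(w_1,w_2)\pm(e,0)$ (with $(e,0)\in\idem(S)$) forces the second coordinates to match, so it restricts to a decomposition $r\mp e=w_1$ in $R$.

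For (1) $\Rightarrow$ (2), I would first invoke Proposition \ref{P1} applied to $S$: uniquely weakly $I'$-clean implies $S$ is abelian and $I'$ is idempotent free. For clause (b), note that if $e\in\idem(R)$ then $(e,0)\in\idem(S)$, hence central in $S$; comparing $(e,0)(0,v)=(0,ev)$ with $(0,v)(e,0)=(0,ve)$ yields $ev=ve$. With (b) established and $V$ idempotent free, Lemma \ref{L4} gives $\idem(S)=\{(e,0):e\in\idem(R)\}$. For clause (a) I would set $I=\{r\in R:(r,0)\in I'\}$; closure under addition is immediate, and if $r\in I$, $s\in R$, then $(sr,0)=(s,0)(r,0)\in I'$ (similarly on the right), so $I$ is a two-sided ideal of $R$. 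Given $x\in R$, apply unique weak $I'$-cleanness to $(x,0)$: the unique idempotent must have the form $(e_0,0)$, and matching second coordinates shows $(x\mp e_0,0)\in I'$, i.e.\ $x\mp e_0\in I$. Uniqueness of $e_0$ in $R$ follows by lifting any competing decomposition $x=w'\pm e'$ to $(x,0)=(w',0)\pm(e',0)$ in $S$ and invoking uniqueness there.

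For (2) $\Rightarrow$ (1), I would take $I'=I\times V$, which is straightforwardly an ideal of $S$ from the bimodule structure. By hypothesis (b) and idempotent-freeness of $V$, Lemma \ref{L4} again gives $\idem(S)=\{(e,0):e\in\idem(R)\}$. For $(r,v)\in S$, apply unique weak $I$-cleanness of $R$ to $r$ to obtain the unique $e\in\idem(R)$ with $r\mp e\in I$; then $(r,v)=(r\mp e,v)\pm(e,0)$ is the required decomposition, since $(r\mp e,v)\in I\times V=I'$. For uniqueness, any decomposition of $(r,v)$ in $S$ uses an idempotent of the form $(e',0)$ by Lemma \ref{L4} and some element of $I\times V$, so restricting to first coordinates reduces to a decomposition of $r$ in $R$ with the same idempotent, and uniqueness in $R$ finishes the argument.

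The main obstacle I expect is the careful bookkeeping of the \emph{mixed} sign case in the uniqueness steps — where one candidate has $r-e\in I$ and the other has $r+e'\in I$ — which must be pushed through both directions consistently; this is precisely where the form of $\idem(S)$ supplied by Lemma \ref{L4} and the idempotent-freeness of $I$ (guaranteed by Proposition \ref{P1}, which in (2) $\Rightarrow$ (1) also makes $I'=I\times V$ idempotent free, combining with the idempotent-freeness of $V$) are both essential.
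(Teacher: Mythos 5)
Your proof is correct, and its skeleton matches the paper's: both directions use Proposition \ref{P1} applied to $S$ to get abelianness (hence condition (b)), Lemma \ref{L4} to identify $Idem(S)=\{(e,0)\,|\,e\in Idem(R)\}$, the same ideals $I=\{r\in R\,|\,(r,0)\in I'\}$ and $I'=I\times V$, and the same componentwise decompositions. The genuine divergence is in how uniqueness is obtained. The paper never transfers uniqueness between $R$ and $S$ at all: in each direction it only produces, for every element, a \emph{central} idempotent decomposition and checks that the relevant ideal ($I$, resp.\ $I'$) is idempotent free, then invokes the characterization of Corollary \ref{Cor1}(iv)$\Leftrightarrow$(i) to conclude unique weak cleanness. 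You instead prove uniqueness directly: since Lemma \ref{L4} forces every idempotent of $S$ to be of the form $(e,0)$, a competing decomposition of $x$ in $R$ lifts to one of $(x,0)$ in $S$ (and conversely a competing decomposition of $(r,v)$ in $S$ restricts to one of $r$ in $R$) with the same idempotent, and uniqueness on the other side finishes it; your worry about the mixed-sign case is in fact harmless, because the definition of unique weak cleanness attaches a single idempotent to both sign choices, so the transferred decomposition identifies the idempotents regardless of sign. What each approach buys: yours is more elementary and self-contained — it needs no centrality bookkeeping, whereas the paper's (ii)$\Rightarrow$(i) direction tacitly needs $(e,0)$ to be central in $S$, which requires combining centrality of $e$ in $R$ with condition (b) — while the paper's route through Corollary \ref{Cor1} dispenses with any uniqueness argument and handles both sign cases in one stroke at the cost of invoking the heavier characterization.
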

\begin{proof}
(i)$\Rightarrow$ (ii) \\
(b) Let $e\in Idem(R)$, then clearly $(e,0)\in Idem(S)$. Since $S$ is weakly $I'$-clean, so by Proposition \ref{P1}, $S$ is abelian. For $v\in V$, $(e,0)(v,0)=(v,0)(e,0)$ and hence $ev=ve$, as required.\\
(a) Consider $I=\{x\in R\,|\,(x,0)\in I'\}$, then $I$ be an ideal of $R$. Since $S$ is abelian, so by Lemma \ref{L4}, $Idem(S)=\{(e,0)\,| \, e\in Idem(R)\}$.\\
Claim: $R$ is uniquely weakly $I$-clean ring.\\
Let $x\in R$. As $S$ is uniquely weakly $I'$-clean, so by Corollary \ref{Cor1}, there exists a central idempotent $(e,0)\in S$ such that $(x,0)=(e,0)+(y,0)$ or $(x,0)=-(e,0)+(y,0)$, for some $(y,0)\in I'$. Therefore $x-e=y\in I$ or $x+e=y\in I$, so $R$ is weakly $I$-clean ring. Let $e^2=e\in I$, which implies $(e,0)\in I'$. Since $I'$ is idempotent free, so $e=0$ and hence $I$ is idempotent free. By Corollary \ref{Cor1}, $R$ is uniquely weakly $I$-clean ring.\\
(ii) $\Rightarrow$ (i)\\
Consider $I'=I\times V$, then $I'$ is an ideal of $S$. Let $(r,v)\in S$. Since $R$ is uniquely weakly $I$-clean ring, so there exists a central idempotent $e\in R$ such that $r-e\in I$ or $r+e\in I$. Now $(r,v)=(e,0)+(r-e,v)$ or $(r,v)=-(e,0)+(r+e,v)$ implies $(r,v)-(e,0)\in I'$ or $(r,v)+(e,0)\in I'$ and hence $S$ is weakly $I'$-clean ring. By (b) and \ref{L4}, we have $Idem(S)=\{(e,0)\in Idem(R)\}$. For $(e,0)^2=(e,0)\in I'$, $e\in I$, but by Corollary \ref{Cor1}, $I$ is idempotent free, so $(e,0)=(0,0)$. Hence $S$ is uniquely weakly $I'$-clean ring.
\end{proof}

\section{Acknowledgement}
The first Author was supported by Government of India under DST(Department of Science and Technology), DST-INSPIRE registration no IF160671.

\end{document}